\theoremstyle{remark} 
\newtheorem*{remark}{Remark}
\theoremstyle{plain} 
\newtheorem{thm}{Theorem} 
\newtheorem{lem}[thm]{Lemma} 
\newtheorem{prop}[thm]{Proposition}
\DeclareMathOperator{\Id}{id}
\newcommand{\N}{\mathbb{N}}
\newcommand{\M}{\mathcal{M}}
\newcommand{\Mo}{\mathcal{M}_0}
\begin{document} 
\title{On the spaces of bounded and compact multiplicative Hankel operators}
\date{\today} 

\author{Karl-Mikael Perfekt}
\address{Department of Mathematics and Statistics,
	University of Reading, Reading RG6 6AX, United Kingdom}
\email{k.perfekt@reading.ac.uk}

\begin{abstract}
	A multiplicative Hankel operator is an operator with matrix representation $M(\alpha) = \{\alpha(nm)\}_{n,m=1}^\infty$, where $\alpha$ is the generating sequence of $M(\alpha)$. Let $\mathcal{M}$ and $\mathcal{M}_0$ denote the spaces of bounded and compact multiplicative Hankel operators, respectively. In this note it is shown that the distance from an operator $M(\alpha) \in \mathcal{M}$ to the compact operators is minimized by a nonunique compact multiplicative Hankel operator $N(\beta) \in \mathcal{M}_0$,
	$$\|M(\alpha) - N(\beta)\|_{\mathcal{B}(\ell^2(\N))} = \inf \left \{\|M(\alpha) - K \|_{\mathcal{B}(\ell^2(\N))} \, : \, K \colon \ell^2(\N) \to \ell^2(\N) \textrm{ compact} \right\}.$$
	Intimately connected with this result, it is then proven that the bidual of $\mathcal{M}_0$ is isometrically isomorphic to $\mathcal{M}$, $\mathcal{M}_0^{\ast \ast} \simeq \mathcal{M}$. It follows that $\Mo$ is an M-ideal in $\M$. The dual space $\Mo^\ast$ is isometrically isomorphic to a projective tensor product with respect to Dirichlet convolution. The stated results are also valid for small Hankel operators on the Hardy space $H^2(\mathbb{D}^d)$ of a finite polydisk.
	
	\noindent \textit{Keywords: } essential norm, Hankel operator, bidual, M-ideal, weak product space
\end{abstract}

\subjclass[2010]{}

\maketitle
\section{Introduction}
Given a sequence $\alpha \colon \N \to \mathbb{C}$, we consider the corresponding multiplicative Hankel operator $m = M(\alpha) \colon \ell^2(\N) \to \ell^2(\N)$, defined by 
$$\langle M(\alpha) a, b \rangle_{\ell^2(\N)} = \sum_{n,m=1}^\infty a(n) \overline{b(m)} \alpha(nm), \quad a, b \in \ell^2(\N).$$
Initially, we consider this equality only for finite sequences $a$ and $b$. It defines a bounded operator $M(\alpha) \colon \ell^2(\N) \to \ell^2(\N)$, with matrix representation $\{\alpha(nm)\}_{n,m=1}^\infty$ in the standard basis of $\ell^2(\N)$, if and only if there is a constant $C > 0$ such that
$$\left| \langle M(\alpha) a, b \rangle_{\ell^2(\N)} \right| \leq C\|a\|_{\ell^2(\N)}\|b\|_{\ell^2(\N)}, \quad a,b \textrm{ finite sequences.}$$
Multiplicative Hankel operators are also known as Helson matrices, having been introduced by Helson in \cite{Helson06, Helson10}. 

There are two common alternative interpretations. One is in terms of Dirichlet series. Let $\mathcal{H}^2$ be the Hardy space of Dirichlet series, the Hilbert space with $(n^{-s})_{n=1}^\infty$ as a basis. If 
$$f(s) = \sum_{n=1}^\infty a(n) n^{-s}, \; g(s) = \sum_{n=1}^\infty \overline{b(n)} n^{-s}, \; \rho(s) = \sum_{n=1}^\infty \overline{\alpha(n)} n^{-s},$$
 then
$$\langle M(\alpha) a, b \rangle_{\ell^2(\N)} = \langle fg, \rho \rangle_{\mathcal{H}^2}.$$
Hence there is an isometric correspondence between Helson matrices and Hankel operators on $\mathcal{H}^2$, since the forms associated the latter are precisely of the type $(f,g) \mapsto \langle fg, \rho \rangle_{\mathcal{H}^2}$.

The second interpretation is in terms of the Hardy space of the infinite polytorus $H^2(\mathbb{T}^\infty)$, the Hilbert space with basis $(z^\kappa)_{\kappa}$, where $z = (z_1, z_2, \ldots)$, and $\kappa = (\kappa_1, \kappa_2, \ldots)$ runs through the countably infinite, but finitely supported, multi-indices.  Identify each integer $n$ with a multi-index $\kappa$ of this type through the factorization of $n$ into the primes $p_1, p_2, \ldots$,
$$n \longleftrightarrow \kappa \; \textrm{  if and only if  } \; n = \prod_{j=1}^\infty p_j^{\kappa_j}.$$
Under this equivalence, multiplicative Hankel operators correspond to additive Hankel operators on a countably infinite number of variables,
$$\langle M(\alpha) a, b \rangle_{\ell^2(\N)} = \sum_{\kappa, \kappa'} a(\kappa) \overline{b(\kappa')} \alpha(\kappa + \kappa').$$
Hence the multiplicative Hankel operators correspond isometrically to small Hankel operators on $H^2(\mathbb{T}^\infty)$, since the matrix representations of the latter are of the form $\{\alpha(\kappa + \kappa')\}_{\kappa, \kappa'}$. See \cite{Helson06, Helson10} for details. 

In particular, the Helson matrices generalize the small Hankel operators on the Hardy space of any finite polytorus $H^2(\mathbb{T}^d)$, $d < \infty$. In fact, the results in this note have analogous statements for small Hankel operators on $H^2(\mathbb{T}^d)$; every proof given remains valid verbatim after restricting the number of prime factors, that is, the number of variables.

The first result is the following. We denote by $\mathcal{B}(\ell^2(\N))$ and $\mathcal{K}(\ell^2(\N))$, respectively, the spaces of bounded and compact operators on $\ell^2(\N)$.
\begin{thm} \label{thm:distcpct}
Let $M(\alpha)$ be a bounded multiplicative Hankel operator. Then there exists a compact multiplicative Hankel operator $N(\beta)$ such that
\begin{equation} \label{eq:essnorm}
\|M(\alpha) - N(\beta)\|_{\mathcal{B}(\ell^2(\N))} = \inf \left \{\|M(\alpha) - K \|_{\mathcal{B}(\ell^2(\N))} \, : \, K \in \mathcal{K}(\ell^2(\N)) \right\}.
\end{equation}
The minimizer $N(\beta)$ is never unique, unless $M(\alpha)$ is compact.
\end{thm}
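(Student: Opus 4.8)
The plan is to identify the infimum in \eqref{eq:essnorm} with the essential norm $d := \|M(\alpha)\|_e = \mathrm{dist}\big(M(\alpha),\mathcal{K}(\ell^2(\N))\big)$ and to realize it inside $\Mo$. Since any $N(\beta)\in\Mo$ is compact, one has $\|M(\alpha)-N(\beta)\|_e = \|M(\alpha)\|_e = d$, so automatically $\|M(\alpha)-N(\beta)\|_{\mathcal{B}(\ell^2(\N))}\ge d$; as $\Mo\subseteq\mathcal{K}(\ell^2(\N))$, the entire content of the theorem is that this lower bound $d$ is \emph{attained} by a compact multiplicative Hankel operator, and that the attaining operator is non-unique. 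I therefore split the work into (a) the distance formula $\mathrm{dist}(M(\alpha),\Mo)=d$ together with attainment, and (b) non-uniqueness.

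For (a) I would first produce a good minimizing sequence inside $\Mo$ using Hankel-preserving Fejér means. Lifting the Cesàro/Fejér kernels of the finite polytori $\mathbb{T}^d$ through the factorization $n\leftrightarrow\kappa$ gives maps $\Delta_N$ on $\mathcal{B}(\ell^2(\N))$ acting as Schur multipliers $\{t_{n,m}\}\mapsto\{\psi_N(nm)\,t_{n,m}\}$, where $\psi_N$ is finitely supported, has multiplier norm at most $1$, and satisfies $\psi_N\to 1$ pointwise. Because $\psi_N(nm)$ depends only on the product $nm$, each $\Delta_N$ preserves multiplicative Hankel structure and has finite-rank range, so $\Delta_N M(\alpha)=M(\psi_N\alpha)\in\Mo$. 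The identity to establish is the distance formula
\[
\lim_{N\to\infty}\big\|M(\alpha)-\Delta_N M(\alpha)\big\|_{\mathcal{B}(\ell^2(\N))}=\lim_{N\to\infty}\big\|M\big((1-\psi_N)\alpha\big)\big\|=d,
\]
which exhibits $M(\psi_N\alpha)$ as a minimizing sequence in $\Mo$ and yields $\mathrm{dist}(M(\alpha),\Mo)=d$. The inequality $\ge d$ is the automatic essential-norm bound above; the reverse inequality $\limsup_N\|(\mathrm{Id}-\Delta_N)M(\alpha)\|\le d$ is the \textbf{main obstacle}. The naive split $(\mathrm{Id}-\Delta_N)M(\alpha)=(\mathrm{Id}-\Delta_N)(M(\alpha)-K)+(\mathrm{Id}-\Delta_N)K$, with $K$ a best compact approximant and $(\mathrm{Id}-\Delta_N)K\to 0$ in norm, only delivers the bound $2d$, since $\|\mathrm{Id}-\Delta_N\|\le 2$. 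The real point is that the Fejér tails $\mathrm{Id}-\Delta_N$ act \emph{asymptotically contractively} on the relevant part of $\mathcal{B}(\ell^2(\N))$ — an $M$-type (maximum, not sum) estimate — and proving this is exactly where the harmonic analysis of Fejér means on the polytorus, and the asymptotic orthogonality of compact operators to the high-frequency tails, must enter. I expect this estimate to carry the proof and simultaneously to certify that $\Mo$ is an M-ideal in $\M$.

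Granting the distance formula, attainment of an actual minimizer follows by weak-$*$ compactness: take any minimizing sequence $R_j\in\Mo$ and pass to a weak-$*$ limit $R=M(\rho)$ in $\mathcal{B}(\ell^2(\N))=\mathcal{S}_1^*$. The multiplicative Hankel relations $R V_k=V_k^* R$, with $V_k e_n=e_{kn}$, are weak-$*$ closed conditions (each is the vanishing of a weak-$*$ continuous functional of $R$ obtained by pairing against rank-one trace-class operators), so $R$ is again a bounded multiplicative Hankel operator, and weak-$*$ lower semicontinuity of the norm gives $\|M(\alpha)-R\|\le d$. Compactness of the limit $R$ — the one remaining point — is forced by the same $M$-structure (equivalently, by proximinality of the M-ideal $\Mo$), yielding the desired $N(\beta):=R\in\Mo$ with $\|M(\alpha)-N(\beta)\|=d$.

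For (b), assume $M(\alpha)$ is not compact, so $d>0$, and let $N(\beta)$ be a minimizer. For every compact multiplicative Hankel operator $N(\gamma)$ the sum $N(\beta)+N(\gamma)$ is again compact Hankel, whence $\|M(\alpha)-N(\beta)-N(\gamma)\|\ge\|M(\alpha)\|_e=d$; it therefore suffices to produce a nonzero $N(\gamma)$ with equality. Writing $\tau=\alpha-\beta$, so that $M(\tau)=M(\alpha)-N(\beta)\ne 0$ is non-compact with $\|M(\tau)\|=\|M(\tau)\|_e=d$, I would take $N(\gamma)=t\,\Delta_N M(\tau)=t\,M(\psi_N\tau)$ for small $t>0$ and large fixed $N$: this is a nonzero finite-rank Hankel operator that alters $M(\tau)$ only in a finite block while leaving its essential norm equal to $d$. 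Because the norm $d=\|M(\tau)\|_e$ is carried ``at infinity'', a sufficiently small finite-rank modification cannot raise the operator norm above $d$, so $\|M(\tau)-t\,M(\psi_N\tau)\|=d$ for all small $t$, giving a nontrivial continuum of distinct minimizers $N(\beta)+t\,M(\psi_N\tau)$. (That small head perturbations do not increase an essential-norm-sized norm is precisely the L-decomposition/M-ideal phenomenon established in (a).) Finally, when $M(\alpha)$ is compact, $d=0$ and the only operator at distance $0$ is $M(\alpha)$ itself, so uniqueness holds exactly in that case.
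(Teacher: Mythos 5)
Your proposal correctly reduces the theorem to (a) attainment of the essential norm within $\Mo$ and (b) non-uniqueness, but at every decisive point it substitutes a conjecture for a proof. The central gap is the estimate $\varlimsup_N\|(\Id-\Delta_N)M(\alpha)\|\leq d$, which you yourself flag as the ``main obstacle'' and then simply assert you ``expect'' to hold: nothing in the proposal supplies it, and it is essentially the whole content of the theorem. The paper never proves such an estimate. Instead it invokes the convex-combination theorem of Axler--Berg--Jewell--Shields (Theorem~\ref{thm:axler}): if $T$ is non-compact and $(T_n)$ are compact operators with $T_n\to T$ and $T_n^\ast\to T^\ast$ in SOT, then some convex combination $J=\sum_n c_nT_n$ is a best compact approximant of $T$. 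Feeding into this the dilation approximants $M(\alpha_r)=D_rM(\alpha)D_r\in\Mo$ of Lemma~\ref{lem:dilation} (contractive, SOT-convergent together with adjoints), and noting that a convex combination of multiplicative Hankel operators is again one, gives existence with no hard harmonic analysis at all. Your Fej\'er multipliers $\Delta_N$ would serve equally well as input to that theorem -- granted the Schur-multiplier norm bound, which itself needs an argument on the infinite polytorus -- so your construction is salvageable, but only by adding precisely the convex-combination device (or a genuine proof of your contractivity estimate) that the proposal lacks.

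The remaining two steps also fail as written. For attainment, a weak$^\ast$ limit (in $\mathcal{B}(\ell^2(\N))=\mathcal{S}_1^\ast$) of compact operators need not be compact -- finite-rank projections converge weak$^\ast$ to the identity -- and your justification for compactness of the limit appeals to proximinality of the M-ideal $\Mo$, i.e.\ to the very M-structure whose proof you deferred to the unproven estimate; the argument is circular, and moreover if you had that proximinality, attainment would follow at once without any weak$^\ast$ limit. For non-uniqueness, the principle you invoke -- that when $\|T\|=\|T\|_e$ a sufficiently small finite-rank perturbation cannot raise the operator norm -- is false in general: take $T=\mathrm{diag}(-1,1,1,\dots)$, so $\|T\|=\|T\|_e=1$, and perturb by the rank-one matrix $-t\,e_1\otimes e_1$ to get norm $1+t$. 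Your particular perturbation direction $t\,\Delta_NM(\tau)$ may well work, but proving so is again exactly the missing M-type estimate. In the paper both of these points are handled by Theorem~\ref{thm:mideal}, whose proof is soft duality: $\Mo$ is M-embedded because it is a closed subspace of the M-embedded space $\mathcal{K}(\ell^2(\N))$, the bidual identification $\Mo^{\ast\ast}\simeq\M$ comes from the inclusion $I\colon\Mo\to\ell^2(\N)$ and Lemma~\ref{lem:dilation}, and then non-uniqueness of best approximants is a standard consequence of the M-ideal property \cite{HSW75}.
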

The quantity on the right-hand side of \eqref{eq:essnorm} is known as the essential norm of $M(\alpha)$. For classical Hankel operators on $H^2(\mathbb{T})$, this result was proven by Axler, Berg, Jewell, and Shields in \cite{ABJS79}, and can be viewed as a limiting case of the theory of Adamjan, Arov, and Krein \cite{AAK71}. The demonstration of Theorem~\ref{thm:distcpct} requires only a minor modification of the arguments in \cite{ABJS79}, the main point being that a characterization of the class of bounded multiplicative Hankel operators is not necessary for the proof.
 
On $H^2(\mathbb{T})$, Nehari's theorem \cite{Nehari} states that the class of bounded Hankel operators can be isometrically identified with $L^\infty(\mathbb{T})/H^{\infty}(\mathbb{T})$, where $L^\infty(\mathbb{T})$ and $H^\infty(\mathbb{T})$ denote the spaces of bounded and bounded analytic functions on $\mathbb{T}$, respectively. By Hartman's theorem \cite{Hartman}, the class of compact Hankel operators is isometrically isomorphic to $(H^\infty(\mathbb{T}) + C(\mathbb{T}))/H^{\infty}(\mathbb{T})$, where $C(\mathbb{T})$ denotes the space of continuous functions on $\mathbb{T}$. Note that the spaces $L^\infty$, $H^\infty$, and $H^\infty + C$ are all algebras, as proven by Sarason \cite{Sara73}.

Luecking \cite{Lu80} observed, through a very illustrative argument relying on function algebra techniques, that the compact Hankel operators form an M-ideal in the space of bounded Hankel operators. M-ideality implies proximinality; the distance from a bounded Hankel operator to the compact Hankel operators has a minimizer. Thus Luecking reproved some of the results of \cite{ABJS79}. Since
$$\left( (H^{\infty} + C)/H^\infty \right)^{\ast \ast} \simeq L^\infty/H^\infty,$$
it follows that the bidual of the space of compact Hankel operators is isometrically isomorphic to the space of bounded Hankel operators. Spaces which are M-ideals in their biduals are said to be M-embedded.

The multiplicative Hankel operators, on the other hand, have thus far resisted all attempts to characterize their boundedness. It has been shown that a Nehari-type theorem cannot exist \cite{OS}, and positive results only exist in special cases \cite{Helson06, PP17}. In spite of this, the main theorem shows that Luecking's result holds for multiplicative Hankel operators.

Let 
$$\Mo = \{m = M(\alpha) \, : \, M(\alpha) \colon \ell^2(\N) \to \ell^2(\N) \textrm{ compact} \}$$
and
$$\M = \{m = M(\alpha) \, : \, M(\alpha) \colon \ell^2(\N) \to \ell^2(\N) \textrm{ bounded} \}.$$
Equipped with the operator norm, $\Mo$ and $\M$ are closed subspaces of $\mathcal{K}(\ell^2(\N))$ and $\mathcal{B}(\ell^2(\N))$, respectively.
For a Banach space $Y$, we denote by $\iota_{Y}$ the canonical embedding $\iota_{Y} \colon Y \to Y^{\ast \ast}$,
$$\iota_{Y} y (y^\ast) = y^\ast(y), \quad y \in Y, \; y^\ast \in Y^\ast.$$
\begin{thm} \label{thm:mideal}
There is a unique isometric isomorphism $U \colon \Mo^{\ast \ast} \to \M$ such that $U \iota_{\Mo} m = m$ for every $m \in \Mo$. Furthermore, $\Mo$ is an M-ideal in $\M$. 
\end{thm}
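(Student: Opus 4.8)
The plan is to derive both assertions from a single classical fact, that $\mathcal{K}(\ell^2(\N))$ is an M-ideal in $\mathcal{B}(\ell^2(\N))$, together with one density lemma; Theorem~\ref{thm:distcpct} itself is not needed as input. Write $\mathcal{S}_1 = \mathcal{K}(\ell^2(\N))^\ast$ for the trace class, so that $\mathcal{B}(\ell^2(\N)) = \mathcal{S}_1^\ast$ and the weak-$\ast$ topology on $\mathcal{B}(\ell^2(\N))$ is the one induced by $\mathcal{S}_1$. The M-ideal property of the compacts is expressed by the $\ell^1$-decomposition $\mathcal{B}(\ell^2(\N))^\ast = \mathcal{S}_1 \oplus_1 \mathcal{K}(\ell^2(\N))^\perp$ of the dual into its normal and singular parts, the associated L-projection $P$ being the projection onto the singular summand $\mathcal{K}(\ell^2(\N))^\perp$.

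The key lemma I would isolate is that $\M$ is exactly the weak-$\ast$ closure of $\Mo$ in $\mathcal{B}(\ell^2(\N))$; equivalently, that every $\psi \in \mathcal{S}_1$ which annihilates $\Mo$ already annihilates $\M$. One inclusion is soft: each matrix coefficient $T \mapsto \langle T e_m, e_n\rangle$ is weak-$\ast$ continuous, so the Hankel constraints $\langle T e_m, e_n\rangle = \langle T e_{m'}, e_{n'}\rangle$ for $nm = n'm'$ cut out a weak-$\ast$ closed subspace; hence $\M$ is weak-$\ast$ closed and contains the closure of $\Mo$. For the reverse inclusion I would produce, for a given $M(\alpha) \in \M$, a norm-bounded net of compact Helson matrices converging to it weak-$\ast$. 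The naive symbol truncation must be avoided here, since it need not control the norm; instead I would first compress to finitely many primes, observing that if $E_d$ is the projection onto the basis vectors indexed by $p_d$-smooth integers, then $E_d M(\alpha) E_d$ is again a Helson matrix, of norm at most $\|M(\alpha)\|$, and then apply Fej\'er (Ces\`aro) means to its symbol in these finitely many variables. The Fej\'er averaging is norm-contractive, because the averaged operator is an average of diagonal-unitary conjugates of the original, and it truncates the symbol to finitely many frequencies, hence yields a finite-rank, and in particular compact, Helson matrix. Letting $d \to \infty$ and the Fej\'er degree tend to infinity along a diagonal produces the desired net in $\Mo$, with norms at most $\|M(\alpha)\|$.

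Granting the lemma, the bidual identification becomes formal. The isometric inclusion $\Mo \hookrightarrow \mathcal{K}(\ell^2(\N))$ makes the restriction map $R \colon \mathcal{S}_1 \to \Mo^\ast$ a metric surjection, so its adjoint $R^\ast \colon \Mo^{\ast\ast} \to \mathcal{B}(\ell^2(\N))$ is an isometric embedding onto the annihilator in $\mathcal{B}(\ell^2(\N))$ of the pre-annihilator of $\Mo$, that is, onto the weak-$\ast$ closure of $\Mo$, which is $\M$ by the lemma. Since $R^\ast \iota_{\Mo} m = m$ for every $m \in \Mo$, the map $U := R^\ast$ is the sought isometric isomorphism onto $\M$, and it is unique by weak-$\ast$ density of $\iota_{\Mo}(\Mo)$ in $\Mo^{\ast\ast}$.

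For the M-ideal assertion I would transport $P$ down to $\M^\ast = \mathcal{B}(\ell^2(\N))^\ast / \M^\perp$. The lemma is precisely what makes $\M^\perp$ split along the decomposition: if $\phi \in \M^\perp$ has normal and singular parts $\phi = \phi_s + \phi_{\mathrm{sing}}$, then $\phi_{\mathrm{sing}}$ annihilates the compacts and so $\Mo$, whence $\phi_s = \phi - \phi_{\mathrm{sing}}$ annihilates $\Mo$; the lemma upgrades this to $\phi_s \in \M^\perp$, and therefore $\phi_{\mathrm{sing}} \in \M^\perp$ as well. Thus $\M^\perp = (\M^\perp \cap \mathcal{S}_1) \oplus_1 (\M^\perp \cap \mathcal{K}(\ell^2(\N))^\perp)$, so the quotient $\M^\ast$ inherits an $\ell^1$-decomposition and $P$ descends to an L-projection on $\M^\ast$, whose range is identified, once more via the lemma, with the annihilator of $\Mo$ in $\M^\ast$. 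This is exactly the assertion that $\Mo$ is an M-ideal in $\M$. I expect the one genuine obstacle to be the reverse inclusion in the density lemma: every other step is soft functional analysis resting on the M-ideality of the compacts, but producing compact Helson approximants with controlled norm is where the Hankel structure and the presence of infinitely many variables interact delicately, and where the compress-then-average construction, rather than a crude truncation, is essential. Finally I would remark that, since M-ideals are proximinal, Theorem~\ref{thm:mideal} reproves the existence of a minimizer in Theorem~\ref{thm:distcpct}.
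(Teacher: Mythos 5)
Your route is genuinely different from the paper's, and for the existence of $U$ and for the M-ideal assertion it is sound. The paper's two ingredients are the contractive symbol map $I \colon \Mo \to \ell^2(\N)$, $M(\alpha) \mapsto \alpha$ (an embedding into a \emph{reflexive} space), and the dilations $D_r$, which give $M(D_r\alpha) = D_r M(\alpha) D_r \in \Mo$ with $\|D_r M(\alpha) D_r\| \leq \|M(\alpha)\|$ and SOT convergence as $r \to 1$; the bidual identification runs through $I^{\ast\ast}$ and Goldstine's theorem, and M-ideality is obtained from the hereditary property of M-embedded spaces \cite[Theorem III.1.6]{HWW}. You instead work with the trace-class duality, prove that $\M$ is the weak-$\ast$ closure of $\Mo$ in $\mathcal{B}(\ell^2(\N))$, realize $U$ as the adjoint of the quotient map $\mathcal{S}_1 \to \Mo^\ast$, and get M-ideality by splitting $\M^\perp$ along Dixmier's decomposition $\mathcal{B}(\ell^2(\N))^\ast = \mathcal{S}_1 \oplus_1 \mathcal{K}(\ell^2(\N))^\perp$ and descending to the quotient $\M^\ast$; that descent argument is a correct, self-contained alternative to the citation. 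Your density lemma is also correct as sketched: $E_d M(\alpha) E_d$ is again a Helson matrix (a product of $p_d$-smooth numbers is smooth, and a non-smooth factor forces a non-smooth product), and the Fej\'er average is a weak integral of $U_{\bar z} M U_{\bar z}$ over diagonal unitaries, hence norm-contractive, with finitely supported symbol and so of finite rank. Do note, though, that the paper's dilation gives the lemma in one line ($D_r M(\alpha) D_r$ is compact, Hankel, norm-bounded, SOT-convergent, hence weak-$\ast$ convergent on bounded sets), so the compress-and-Fej\'er construction, while valid, is the long way around the one step you identified as delicate.

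The genuine gap is uniqueness. You dismiss it "by weak-$\ast$ density of $\iota_{\Mo}(\Mo)$ in $\Mo^{\ast\ast}$", but a competing isometric isomorphism $V \colon \Mo^{\ast\ast} \to \M$ with $V\iota_{\Mo}m = m$ is not assumed weak-$\ast$-to-weak-$\ast$ continuous, so agreeing with $U = R^\ast$ on a weak-$\ast$ dense subspace proves nothing. Indeed, the statement "every surjective isometry of $X^{\ast\ast}$ fixing $\iota_X(X)$ pointwise is the identity" is \emph{false} for general Banach spaces, so no soft density argument can establish it: for $X = \ell^1$ one has $X^{\ast\ast} = (\ell^\infty)^\ast = \ell^1 \oplus_1 (c_0)^\perp$, with $\iota_X(X)$ equal to the first summand, and $\Id \oplus (-\Id)$ is a surjective isometry fixing $\iota_X(X)$ pointwise that is not the identity. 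What makes uniqueness true here is precisely the M-embeddedness of $\Mo$: the paper writes $F = V^{-1}U$, an isometric automorphism of $\Mo^{\ast\ast}$ fixing $\iota_{\Mo}(\Mo)$, invokes \cite[Proposition III.2.2]{HWW} to conclude $F = G^{\ast\ast}$ for an isometry $G$ of $\Mo$, and then checks $G = \Id_{\Mo}$, whence $F = \Id$. Since you do establish that $\Mo$ is an M-ideal in $\M \simeq \Mo^{\ast\ast}$ at the end of your argument, the fix is available to you, but as written the uniqueness step is unjustified and must be rerouted through this (or an equivalent strong-uniqueness-of-predual) result.
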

\begin{remark}
As pointed out earlier, Theorem~\ref{thm:mideal} is also true when stated for small Hankel operators on $H^2(\mathbb{T}^d)$, $d<\infty$. The biduality has in this case been observed isomorphically in \cite{LTW06}, with an argument based on the non-isometric Nehari-type theorems proven in \cite{FL02, LT09}. 
\end{remark}
The M-ideal property means the following: there is an (onto) projection $L \colon \M^\ast \to \Mo^\perp$ such that
$$\|m^\ast\|_{\M^\ast} = \|Lm^\ast\|_{\M^\ast} + \|m^\ast - Lm^\ast\|_{\M^\ast}, \quad m^\ast \in \M^\ast,$$
where $\Mo^\perp$ denotes the space of functionals $m^\ast \in \M^\ast$ which annihilate $\Mo$. M-ideals were introduced by Alfsen and Effros \cite{AE72} as a Banach space analogue of closed two-sided ideals in $C^\ast$-algebras. Very loosely speaking, the fact that $\Mo$ is an M-ideal in $\M$ implies that the norm of $\M$ resembles a maximum norm and, in this analogy, that $\Mo$ is the subspace of elements vanishing at infinity. The book \cite{HWW} comprehensively treats M-structure theory and its applications.

We will make use of the following consequences of Theorem~\ref{thm:mideal}. Proximinality of $\Mo$ in $\M$ was already mentioned, but the M-ideal property also implies that the minimizer is never unique \cite{HSW75}. It also ensures that $\Mo^\ast$ is a strongly unique predual of $\M$ \cite[Proposition III.2.10]{HWW}. This means that every isometric isomorphism of $\M$ onto $Y^\ast$, $Y$ a Banach space, is weak$^\ast$-weak$^\ast$ continuous, that is, arises as the adjoint of an isometric isomorphism of $Y$ onto $\Mo^\ast$. On the other hand, $\Mo^\ast$ has infinitely many different preduals \cite[Theoreme 27]{godefroy}.

The predual of $\M$ is well known to have an almost tautological characterization as a projective tensor product with respect to Dirichlet convolution,
$$\mathcal{X} = \ell^2(\N) \; \hat{\star} \; \ell^2(\N).$$
The space $\mathcal{X}$ is also referred to as a weak product space. We defer the precise definition to the next section -- after establishing the main theorems, we essentially show, following \cite{RS14}, that all reasonable definitions of $\mathcal{X}$ coincide. 
\begin{thm}\label{thm:midspace}
There is an isometric isomorphism $L \colon \mathcal{X} \to \Mo^\ast$ such that $L^\ast U^{-1} \colon \M \to \mathcal{X}^\ast$ is the canonical isometric isomorphism of $\M$ onto $\mathcal{X}^\ast$, where $U \colon \Mo^{\ast \ast} \to \M$ is the isometric isomorphism of Theorem~\ref{thm:mideal}.
\end{thm}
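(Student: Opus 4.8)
\emph{Plan.} The plan is to first make the weak product space $\mathcal{X}$ precise, then to identify its dual isometrically with $\M$ in the tautological way, and finally to extract $L$ from the strong uniqueness of the predual, which is already available as a consequence of Theorem~\ref{thm:mideal}. Define the Dirichlet convolution by $(a \star b)(k) = \sum_{nm = k} a(n) b(m)$ and set $\mathcal{X} = \ell^2(\N) \, \hat{\star} \, \ell^2(\N)$ to be the space of sequences $h$ admitting a representation $h = \sum_j a_j \star b_j$ with $a_j, b_j \in \ell^2(\N)$ and $\sum_j \|a_j\|_{\ell^2(\N)} \|b_j\|_{\ell^2(\N)} < \infty$, normed by the infimum of $\sum_j \|a_j\|_{\ell^2(\N)} \|b_j\|_{\ell^2(\N)}$ over all such representations. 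Following \cite{RS14}, the first task is to verify that this definition is robust, i.e. that the finite-sum version, the absolutely convergent version, and the realization as a concrete space of sequences all produce the same Banach space with the same norm.

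Second, I would establish the canonical isometric duality $\mathcal{X}^\ast \simeq \M$. Since $e_n \star e_m = e_{nm}$ for the standard basis sequences, any $\phi \in \mathcal{X}^\ast$ is determined by $\alpha(k) = \phi(e_k)$, and on elementary tensors $\phi(a \star b) = \sum_{n,m} a(n) b(m) \alpha(nm)$, which is exactly the Helson form with generating sequence $\alpha$. The projective nature of the norm makes the correspondence $\phi \leftrightarrow M(\alpha)$ isometric and onto, and I would record the resulting canonical isometric isomorphism $T \colon \M \to \mathcal{X}^\ast$, with pairing $\langle h, M(\alpha) \rangle = \sum_k \alpha(k) h(k)$.

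Third, with the duality in hand, I would invoke the strong uniqueness of the predual $\Mo^\ast$, which follows from the M-embeddedness $\Mo^{\ast \ast} \simeq \M$ of Theorem~\ref{thm:mideal}. Strong uniqueness says precisely that the isometric isomorphism $T \colon \M \to \mathcal{X}^\ast$ is weak$^\ast$-to-weak$^\ast$ continuous, hence arises as a preadjoint: there is a unique isometric isomorphism $L \colon \mathcal{X} \to \Mo^\ast$ with $L^\ast U^{-1} = T$, where $U$ is the isomorphism of Theorem~\ref{thm:mideal}. This is exactly the assertion of the theorem. Concretely, the same $L$ can be realized as $L = R \circ \iota_{\mathcal{X}}$, where $R \colon \M^\ast \to \Mo^\ast$ is the restriction map; that $L$ is a surjective isometry then follows from the $L$-decomposition $\M^\ast = \Mo^\perp \oplus_1 \ker P$ afforded by the M-ideal property, together with the facts that $R$ is isometric on the complementary summand $\ker P = \iota_{\Mo^\ast}(\Mo^\ast)$ and that this summand coincides with $\iota_{\mathcal{X}}(\mathcal{X})$.

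The main obstacle lies in the groundwork rather than in the final deduction. Verifying that $\mathcal{X}^\ast = \M$ is a genuine isometry (not merely an isomorphism) requires the projective tensor norm to be pinned down exactly, and this in turn rests on the robustness of the weak product construction in the sense of \cite{RS14}. The heart of the matter is the identification $\iota_{\mathcal{X}}(\mathcal{X}) = \ker P$, equivalently the statement that $\mathcal{X}$ and $\Mo^\ast$ are \emph{the same} predual of $\M$; this is where the M-embeddedness does the real work. Once both the isometric duality and strong uniqueness are in place, tracking the identifications to confirm $L^\ast U^{-1} = T$ is essentially bookkeeping.
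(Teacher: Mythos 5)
Your proposal is correct and takes essentially the same route as the paper's own ``quickest proof'': first establish the tautological isometric duality $\mathcal{X}^\ast \simeq \M$ (the paper's Lemma~\ref{lem:predual}; your $T$ is the paper's $J$), then invoke strong uniqueness of the predual of $\M$, which follows from the M-embeddedness of $\Mo$ established in Theorem~\ref{thm:mideal}, to realize $J$ as $L^\ast U^{-1}$ for an isometric isomorphism $L \colon \mathcal{X} \to \Mo^\ast$. The only differences are cosmetic: the paper defers the robustness of the definition of $\mathcal{X}$ (your first step) to Proposition~\ref{prop:altdef}, since the theorem itself only needs the completion-of-finite-sums definition, and it pins down the concrete formula $Lc(m) = (\alpha, c)$ by the computation \eqref{eq:Lcomp}, which your concluding remark about $L = R \circ \iota_{\mathcal{X}}$ gestures at without needing to carry out.
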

Informally stated, $\Mo^\ast \simeq \mathcal{X}$ and $\mathcal{X}^\ast \simeq \M$.  Theorem~\ref{thm:midspace} follows at once from Theorem~\ref{thm:mideal} and the uniqueness of the predual of $\M$, but we also supply a direct proof.
While the duality $\mathcal{X}^\ast \simeq \M$ is a rephrasing of the definition of $\M$, it is difficult to identify a common approach to dualities of the type $\Mo^\ast \simeq \mathcal{X}$ in the existing literature. Often, the latter duality is deduced (isomorphically) via a concrete description of $\M$. For a small selection of relevant examples, see \cite{AFP85, CW77, HWW, LTW06, Li13, Per13, Wu93}.

The idea behind this note is that the direct view of $\M$ as a subspace of $\mathcal{B}(\ell^2(\N))$  already provides sufficient information to prove Theorems~\ref{thm:distcpct}, \ref{thm:mideal}, and \ref{thm:midspace}. In this direction, Wu \cite{Wu93} worked with an embedding into the space of bounded operators to deduce duality results for certain Hankel-type forms on Dirichlet spaces.

The proofs of the results only have two main ingredients. The first is a device to approximate elements of $\M$ by elements of $\Mo$ (Lemma~\ref{lem:dilation}). Such an approximation property is necessary, because if $\Mo^{\ast \ast} \simeq \M$, then the unit ball of $\Mo$ is weak$^\ast$ dense in the unit ball of $\M$. The second ingredient is an inclusion of $\M$ into a reflexive space; in our case, $\ell^2(\N)$. Analogous theorems could be proven for many other linear spaces of bounded and compact operators using the same technique. 

\section{Results}
For a sequence $a$ and $0 < r < 1$, let 
$$D_r a(n) = r^{\sum_{j=1}^\infty j\kappa_j} a(n), \textrm{ where }  n = \prod_{j=1}^\infty p_j^{\kappa_j}.$$
Note that
$$\sum_{\kappa} r^{2 \sum_{j=1}^\infty j\kappa_j} = \prod_{j=1}^\infty \frac{1}{1-r^{2j}} < \infty.$$
Hence it follows by the dominated convergence theorem that $D_r \colon \ell^2(\mathbb{N}) \to \ell^2(\mathbb{N})$ is a compact operator. Furthermore, $D_r$ is self-adjoint and contractive, $\|D_r\|_{\mathcal{B}(\ell^2(\N))} \leq 1$. The dominated convergence theorem also implies that $D_r \to \Id_{\ell^2(\N)}$  in the strong operator topology (SOT) as $r \to 1$, that is, $\lim_{r \to 1} D_r a = a$ in $\ell^2(\N)$, for every $a \in \ell^2(\N)$. A study of the operators $D_r$ in the context of Hardy spaces of the infinite polytorus can be found in \cite{AOS}.

The Dirichlet convolution of two sequences $a$ and $b$ is the new sequence $a \star b$ given by
$$(a \star b)(n) = \sum_{k|n} a(k) \overline{b(n/k)}, \quad n \in \N.$$
If $a$ and $b$ are two finite sequences, then
\begin{equation}
\label{eq:helsondirichlet}
\langle M(\alpha)a, b \rangle_{\ell^2(\N)} = (\alpha, a \star b),
\end{equation}
where $( a, b ) = \sum_{n=1}^\infty a(n) b(n)$ denotes the bilinear pairing between $a, b \in \ell^2(\N)$. Note also that, for $0 < r < 1$,
\begin{equation}
\label{eq:dilationdirichlet}
D_r(a \star b) = D_ra \star D_r b.
\end{equation}

The following simple lemma is key.
\begin{lem} \label{lem:dilation}
Let $M(\alpha)$ be a bounded multiplicative Hankel operator, $M(\alpha) \in \M$. For $0 < r < 1$, let $\alpha_r = D_r\alpha$. Then $M_{\alpha_r} \in \Mo$, 
$$\|M_{\alpha_r}\|_{\mathcal{B}(\ell^2(\N))} \leq \|M_\alpha\|_{\mathcal{B}(\ell^2(\N))},$$
and $M_{\alpha_r} \to M_\alpha$ and $M_{\alpha_r}^\ast \to M_{\alpha}^\ast$ SOT as $r \to 1$.
\end{lem}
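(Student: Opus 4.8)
The plan is to establish the single operator factorization
$$M_{\alpha_r} = D_r M(\alpha) D_r,$$
from which all three assertions follow at once.

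First I would verify this identity at the level of quadratic forms on the dense subspace of finite sequences, where \eqref{eq:helsondirichlet} and \eqref{eq:dilationdirichlet} are available. The key preliminary observation is that $D_r$ acts diagonally, $D_r c(n) = r^{\sum_{j} j\kappa_j} c(n)$ with $n = \prod_j p_j^{\kappa_j}$, and that the weights are positive reals; hence $D_r$ is self-adjoint not only for the inner product on $\ell^2(\N)$ but also for the bilinear pairing, $(D_r u, v) = (u, D_r v)$. For finite sequences $a,b$ I would then compute, using \eqref{eq:helsondirichlet}, this self-adjointness, and \eqref{eq:dilationdirichlet} in turn,
$$\langle M_{\alpha_r} a, b \rangle = (D_r\alpha, a \star b) = (\alpha, D_r(a \star b)) = (\alpha, D_r a \star D_r b) = \langle M(\alpha) D_r a, D_r b \rangle.$$
Since $D_r$ is self-adjoint on $\ell^2(\N)$, the right-hand side equals $\langle D_r M(\alpha) D_r a, b \rangle$. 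As $D_r M(\alpha) D_r$ is a bounded operator and finite sequences are dense, this simultaneously shows that the form defining $M_{\alpha_r}$ is bounded and that $M_{\alpha_r} = D_r M(\alpha) D_r$.

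With the factorization in hand the three claims are routine. Compactness of $M_{\alpha_r}$ holds because $D_r$ is compact, so $D_r M(\alpha) D_r$ is compact; thus $M_{\alpha_r} \in \Mo$. The norm estimate is immediate from $\|D_r\|_{\mathcal{B}(\ell^2(\N))} \leq 1$, giving $\|M_{\alpha_r}\| \leq \|D_r\|^2\,\|M(\alpha)\| \leq \|M(\alpha)\|$. For the SOT convergence I would fix $a \in \ell^2(\N)$ and split
$$M_{\alpha_r} a - M(\alpha) a = D_r M(\alpha)(D_r a - a) + (D_r - \Id_{\ell^2(\N)}) M(\alpha) a.$$
The first term is bounded by $\|M(\alpha)\|\,\|D_r a - a\| \to 0$, and the second tends to $0$ since $D_r \to \Id_{\ell^2(\N)}$ in SOT applied to the fixed vector $M(\alpha) a$. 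Taking adjoints gives $M_{\alpha_r}^\ast = D_r M(\alpha)^\ast D_r$, so the identical argument with $M(\alpha)^\ast$ in place of $M(\alpha)$ yields $M_{\alpha_r}^\ast \to M(\alpha)^\ast$ SOT.

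The one point requiring care—more a matter of bookkeeping than a genuine obstacle—is that \eqref{eq:helsondirichlet} and \eqref{eq:dilationdirichlet} are only asserted for finite sequences, so the factorization must first be proven for forms on finite sequences and only afterwards promoted to an operator identity by density, using that the candidate $D_r M(\alpha) D_r$ is already known to be bounded. Everything else is a direct consequence of the factorization together with the compactness, contractivity, and SOT-convergence of $D_r$ recorded before the lemma.
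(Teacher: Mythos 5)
Your proof is correct and follows the same route as the paper: both establish the factorization $M_{\alpha_r} = D_r M(\alpha) D_r$ by verifying the form identity $\langle M(\alpha_r)a,b\rangle = \langle M(\alpha)D_r a, D_r b\rangle$ on finite sequences via \eqref{eq:helsondirichlet} and \eqref{eq:dilationdirichlet}, and then read off compactness, the norm bound, and SOT convergence from the properties of $D_r$. You merely spell out details the paper leaves implicit (the self-adjointness of $D_r$ for the bilinear pairing, the density argument, and the two-term splitting for SOT convergence), and your handling of the adjoint by transposing the factorization is an equivalent shortcut for the paper's observation that $M_{\alpha_r}^\ast = M(\overline{\alpha}_r)$.
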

\begin{proof}
By \eqref{eq:helsondirichlet} and \eqref{eq:dilationdirichlet}, it holds for finite sequences $a$ and $b$ that
$$\langle M(\alpha_r)a, b \rangle_{\ell^2(\N)} = \langle M_\alpha D_r a, D_r b \rangle_{\ell^2(\N)}.$$
Hence $M_{\alpha_r} = D_r M_\alpha D_r$. Thus $M_{\alpha_r}$ is compact, $\|M_{\alpha_r}\|_{\mathcal{B}(\ell^2(\N))} \leq \|M_\alpha\|_{\mathcal{B}(\ell^2(\N))}$, and
$M_{\alpha_r} \to M_\alpha$
SOT as $r \to 1$. Similarly, $M_{\alpha_r}^\ast = M_{\overline{\alpha}_r} \to M_{\overline{\alpha}} = M_{\alpha}^\ast$ SOT as $r \to 1$.
\end{proof}
The following is a recognizable consequence, cf. \cite[Theorem 1]{Sara75}. Note that if $S_n$ and $T_n$ are operators such that $S_n \to S$ and $T_n \to T$ SOT, and if $C$ is a compact operator, then $S_n C T_n^{\ast} \to SCT^{\ast}$ in operator norm.
\begin{prop}
	Let $M(\alpha) \in \M$. Then $M(\alpha) \in \Mo$ if and only if 
	\begin{equation} \label{eq:compactconv}
	\lim_{r \to 1}\|M(\alpha_r) - M(\alpha)\|_{\M} = 0.
	\end{equation}
\end{prop}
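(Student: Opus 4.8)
The plan is to prove both directions of the equivalence, using Lemma~\ref{lem:dilation} as the main engine together with the operator-norm convergence principle stated in the note just before the proposition.

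For the harder direction, suppose $M(\alpha) \in \Mo$, so that $M(\alpha)$ is compact. By Lemma~\ref{lem:dilation} we have the factorization $M(\alpha_r) = D_r M(\alpha) D_r$, where $D_r \to \Id_{\ell^2(\N)}$ SOT and, since $D_r$ is self-adjoint, also $D_r^\ast = D_r \to \Id_{\ell^2(\N)}$ SOT. Applying the stated principle with $S_n = T_n = D_r$, $S = T = \Id$, and $C = M(\alpha)$ compact, I conclude that $D_r M(\alpha) D_r \to M(\alpha)$ in operator norm as $r \to 1$, which is exactly \eqref{eq:compactconv}. Thus compactness of $M(\alpha)$ gives norm convergence of the dilations, and the factorization through the compact operator $M(\alpha)$ is what upgrades the merely strong convergence of Lemma~\ref{lem:dilation} to convergence in $\M$.

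For the converse direction, suppose \eqref{eq:compactconv} holds. By Lemma~\ref{lem:dilation} each $M(\alpha_r)$ lies in $\Mo$ and is therefore compact. Condition \eqref{eq:compactconv} says that $M(\alpha)$ is the operator-norm limit of the compact operators $M(\alpha_r)$ as $r \to 1$. Since the compact operators $\mathcal{K}(\ell^2(\N))$ form a norm-closed subspace of $\mathcal{B}(\ell^2(\N))$, any norm limit of compact operators is compact; hence $M(\alpha)$ is compact, i.e. $M(\alpha) \in \Mo$. Strictly speaking the limit in \eqref{eq:compactconv} is taken along $r \to 1$ rather than along a sequence, but one may simply pick any sequence $r_k \to 1$ to invoke norm-closedness, or observe directly that a net converging in norm to $M(\alpha)$ with all terms compact forces $M(\alpha)$ into the closed set $\mathcal{K}(\ell^2(\N))$.

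The only genuine subtlety, and the step I expect to require the most care, is the forward direction: justifying that strong convergence $D_r \to \Id$ on both sides promotes to norm convergence of $D_r M(\alpha) D_r$ precisely because $M(\alpha)$ is compact. This is the content of the remark preceding the proposition, so I would cite it directly rather than reprove it, but it is worth emphasizing that compactness is essential here—without it one only obtains the strong convergence already furnished by Lemma~\ref{lem:dilation}, which is insufficient to conclude \eqref{eq:compactconv}. Everything else is a routine appeal to the norm-closedness of the compact operators.
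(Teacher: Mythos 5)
Your proof is correct and follows essentially the same route as the paper: the forward direction applies the principle that $S_n C T_n^\ast \to SCT^\ast$ in norm when $C$ is compact (using $M(\alpha_r) = D_r M(\alpha) D_r^\ast$ and self-adjointness of $D_r$), and the converse uses norm-closedness of $\mathcal{K}(\ell^2(\N))$. Your extra remarks on the net-versus-sequence point and on where compactness is essential are accurate but not needed beyond what the paper already does.
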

\begin{proof}
	If \eqref{eq:compactconv} holds, then $M(\alpha) \in \Mo$, since $M(\alpha_r)$ is compact for every $0 < r < 1$. If $M(\alpha) \in \Mo$, then \eqref{eq:compactconv} holds, since $M(\alpha_r) = D_r M(\alpha) D_r = D_r M(\alpha) D_r^\ast$ and $D_r \to \Id_{\ell^2(\N)}$ SOT as $r \to 1$.
\end{proof}

Recall next the main tool from \cite{ABJS79}.
\begin{thm}[\cite{ABJS79}] \label{thm:axler}
Let $T \colon \ell^2(\mathbb{N}) \to \ell^2(\mathbb{N})$ be a non-compact operator and $(T_n)$ a sequence of compact operators such that $T_n \to T$ SOT and $T_n^\ast \to T^\ast$ SOT. Then there exists a sequence $(c_n)$ of non-negative real numbers such that $\sum_n c_n = 1$ for which the compact operator
$$J = \sum_n c_n T_n$$
satisfies
$$\|T - J\|_{\mathcal{B}(\ell^2(\N))} = \inf \left \{\|M(\alpha) - K \|_{\mathcal{B}(\ell^2(\N))} \, : \, K \in \mathcal{K}(\ell^2(\N)) \right\}.$$
\end{thm}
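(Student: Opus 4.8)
The plan is to prove the two inequalities separately, writing $\delta$ for the infimum on the right-hand side (the essential norm of $T$). Any operator $J$ of the stated form is a norm-convergent series of compact operators: by the uniform boundedness principle $\sup_n \|T_n\| < \infty$, so $\sum_n c_n \|T_n\| \le \sup_n \|T_n\| < \infty$, and hence each such $J$ is compact. Consequently $\|T - J\|_{\mathcal{B}(\ell^2(\N))} \ge \delta$ automatically, and all the work goes into producing a convex combination achieving $\|T - J\| \le \delta$.

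First I would record the one ingredient that makes $\delta$ tractable. If $P_m$ denotes the orthogonal projection onto the span of the first $m$ basis vectors, then $\|T(I - P_m)\|$ and $\|(I-P_m)T\|$ decrease monotonically to $\delta$ as $m \to \infty$. The lower bound $\ge \delta$ is immediate because $TP_m$ is finite rank; the upper bound follows by writing $T = (T-K) + K$ for a near-optimal compact $K$ and using $\|K(I-P_m)\| \to 0$. This identifies the essential norm with the decay of $T$ on high-frequency tails, on either side.

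The heart of the argument is an approximate version: for every $\epsilon > 0$ there is a finite convex combination $J = \sum_k c_k T_{n_k}$ with $\|T - J\| \le \delta + \epsilon$. I would select a subsequence $(T_{n_k})$ together with a staircase of levels $m_{k}$ so that, using $T_{n_k} \to T$ and $T_{n_k}^\ast \to T^\ast$ SOT on a fixed head and compactness of each $T_{n_k}$ on the tail, one has $T_{n_k} \approx T$ on $\ran P_{m_{k-1}}$ and $T_{n_k} \approx 0$ beyond $P_{m_k}$, with errors summable in $k$. For a unit vector $x$ one then splits $(T - J)x = \sum_k c_k(T - T_{n_k})x$ into a tail piece $\sum_k c_k T(I - P_{m_k})x = T(I-A)x$, where $A = \sum_k c_k P_{m_k}$, a diagonal piece carried by the shells $P_{m_k} - P_{m_{k-1}}$, and a summable error. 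The point is that the tail piece is a convex combination of vectors each bounded by a high-frequency tail norm, hence is bounded by $\delta + \epsilon$ via convexity and the previous step, whereas the diagonal piece has norm at most a constant times $(\sum_k c_k^2)^{1/2}$ by Cauchy--Schwarz and orthogonality of the shells. Spreading the weights over a long, high block --- for instance $c_k = 1/N$ on $k \in \{K, \dots, K+N\}$ with $K, N$ large --- simultaneously forces the tail norms close to $\delta$ and makes $(\sum_k c_k^2)^{1/2} = N^{-1/2}$ small, giving the bound uniformly in $x$.

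The main obstacle is upgrading $\delta + \epsilon$ to exactly $\delta$ with a compact minimizer; this is the genuine content of the theorem. The candidate set $\overline{\mathrm{conv}}\{T_n\}$ is a norm-closed, bounded, convex subset of the non-reflexive space $\mathcal{K}(\ell^2(\N))$, and it is not weakly compact --- indeed $\{T_n\}$ has no weak cluster point in $\mathcal{K}(\ell^2(\N))$, since the only candidate is the non-compact SOT-limit $T$ --- so attainment cannot be extracted from soft weak-compactness arguments, and a weak$^\ast$ limit taken in $\mathcal{B}(\ell^2(\N))$ need not remain compact. Instead I would iterate the approximate construction with $\epsilon_l \downarrow 0$, each time pushing the supporting indices out to a fresh block disjoint from and beyond the previous ones (possible because SOT convergence persists along the tail of the sequence), and then assemble a single infinite convex combination $J = \sum_l \mu_l J_l$, tuning the block parameters and the weights $\mu_l$ so that the successive excesses over $\delta$ are controlled and vanish in the limit. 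Verifying that this limiting $J$, compact as a norm-convergent series of compacts, attains $\delta$ rather than merely approaching it is precisely where the careful diagonalization of \cite{ABJS79} enters, and is the step I expect to demand the most care.
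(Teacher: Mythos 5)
Your proposal gets the routine parts right, and it is worth saying that the paper offers no proof to compare against: Theorem~\ref{thm:axler} is imported verbatim as ``the main tool from \cite{ABJS79}'', so a blind proof must carry the full weight of that reference. The easy inequality (any $J=\sum_n c_n T_n$ is compact because $\sup_n\|T_n\|<\infty$, so $\|T-J\|\geq\delta$), the identity $\lim_m\|T(I-P_m)\|=\delta$, and your gliding-hump construction of a \emph{finite} convex combination with $\|T-J\|\leq\delta+\varepsilon$ (head terms small by SOT convergence, $T_{n_k}$ negligible past $P_{m_k}$ by compactness, shell terms bounded by $M\bigl(\sum_k c_k^2\bigr)^{1/2}$ via Cauchy--Schwarz and orthogonality) are all sound.

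The genuine gap is exact attainment, and your proposed mechanism for it cannot work as described. If $J=\sum_l \mu_l J_l$ with $\|T-J_l\|\leq\delta+\epsilon_l$ on disjoint blocks, the only estimate your setup provides is convexity: $\|T-J\|\leq\delta+\sum_l\mu_l\epsilon_l$, and $\sum_l\mu_l\epsilon_l$ is \emph{strictly} positive for every admissible choice of weights, since every $\epsilon_l>0$ and $\sum_l\mu_l=1$; no tuning of blocks or weights removes it. Likewise, rerunning your single-pass estimate with an infinite weight sequence always leaves the additive excess $M\bigl(\sum_k c_k^2\bigr)^{1/2}+\sum_k c_k\varepsilon_k>0$, because the weights are fixed before $x$ is. So triangle-inequality and convexity reasoning can only ever reproduce the $\delta+\varepsilon$ statement you already proved; attainment requires a different idea. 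What \cite{ABJS79} actually exploit is Hilbert-space geometry --- the strict gain in the identity
$$\Bigl\|\sum_n c_n v_n\Bigr\|^2=\sum_n c_n\|v_n\|^2-\frac12\sum_{n,m}c_nc_m\|v_n-v_m\|^2,$$
applied to $v_n=(T-T_n)x$, so that spreading the weights yields a quantitative improvement rather than a loss --- combined with an inductive choice of weights that converts these gains into a limit equal to $\delta$ exactly. Your closing sentence defers precisely this step to ``the careful diagonalization of \cite{ABJS79}'', which is circular: that diagonalization \emph{is} the theorem. As it stands, you have a correct proof of the approximate version and an unfilled placeholder where the content of the result should be.
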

Lemma~\ref{lem:dilation} and Theorem~\ref{thm:axler} immediately yield the existence part of Theorem~\ref{thm:distcpct}.
\begin{proof}[Proof of Theorem~\ref{thm:distcpct}]
Let $M(\alpha)$ be a bounded multiplicative Hankel operator and let $(r_k)$ be a sequence such that $0 < r_k < 1$ and $r_k \to 1$. Then $M(\alpha)$ has a best compact approximant of the form
$$N = \sum_k c_k M(\alpha_{r_k}).$$
But then $N = N(\beta)$ is a multiplicative Hankel operator, where
$\beta = \sum_k c_k \alpha_{r_k}.$

The non-uniqueness of $N(\beta)$ follows immediately once we have established Theorem~\ref{thm:mideal}, by general M-ideal results \cite{HSW75}. In fact, if $M(\alpha) \not \in \Mo$, then the set of minimizers $N(\beta)$ is so large that it spans $\Mo$.
\end{proof}

Note that
$$\|M(\alpha)\|_{\mathcal{B}(\ell^2(\N))} \geq \varlimsup_{N\to\infty} \frac{1}{\|(\alpha(n))_{n=1}^N\|_{\ell^2(\mathbb{N})}}\sum_{n=1}^N |\alpha(n)|^2 = \|\alpha\|_{\ell^2(\mathbb{N})}.$$
Therefore the inclusion $I \colon \Mo \to \ell^2(\mathbb{N})$ is a contractive operator, $Im = I(M(\alpha)) = \alpha$. We can state Theorem~\ref{thm:mideal} slightly more precisely in terms of $I$.
\newtheorem*{thm:mideal}{Theorem~\ref{thm:mideal}}
\begin{thm:mideal}
Consider the bitranspose $U = I^{\ast \ast} \colon \Mo^{\ast \ast} \to \ell^2(\N)$. Then $U \Mo^{\ast \ast} = \M$, viewing $\M$ as a (non-closed) subspace of $\ell^2(\N)$. Furthermore,
$$U \iota_{\Mo} m = m, \quad m \in \Mo,$$
and
$$\|U m^{\ast \ast}\|_{\M} = \|m^{\ast \ast}\|_{\Mo^{\ast \ast}}, \quad m^{\ast \ast} \in \Mo^{\ast \ast}.$$
If $V \colon \Mo^{\ast \ast} \to \M$ is another isometric isomorphism such that $V \iota_{\Mo}m = m$ for all $m \in \Mo$, then $V = U$. Furthermore, $\Mo$ is an M-ideal in $\M$.
\end{thm:mideal}
\begin{proof}
We identify $\left(\ell^2(\N)\right)^\ast \simeq \ell^2(\N)$ linearly through the pairing $( a, b ) = \sum_{n=1}^\infty a(n) b(n)$ between $a, b \in \ell^2(\N)$. 
With this convention, $I^\ast \colon \ell^2(\N) \to \Mo^\ast$ is also contractive, and  
$$I^\ast a(m) = ( \alpha, a ), \quad a \in \ell^2(\N), \; m = M(\alpha) \in \Mo.$$
Since $I$ is injective, $I^\ast$ has dense range. In particular, $\Mo^\ast$ is separable. Furthermore, $I^{\ast \ast} \colon \Mo^{\ast \ast} \to \ell^2(\N)$ is injective. By the reflexivity of $\ell^2(\N)$, we have that $I^{\ast \ast} \iota_{\Mo} = I$, since
$$(I^{\ast \ast} \iota_{\Mo} m, a) = \iota_{\Mo} m( I^\ast a) = (\alpha, a) = (Im, a), \quad m = M(\alpha) \in \Mo, \; a \in \ell^2(\N). $$
The interpretation, viewing $\M$ as a non-closed subspace of $\ell^2(\N)$, is that $I^{\ast \ast} \iota_{\Mo}m = m$, for all $m \in \Mo$.

Consider any $m^{\ast \ast} \in \Mo^{\ast \ast}$, and let $\alpha = I^{\ast \ast}m^{\ast \ast} \in \ell^2(\N)$. Since $\Mo^\ast$ is separable, the weak$^\ast$ topology of the unit ball $B_{\Mo^{\ast \ast}}$ of $\Mo^{\ast \ast}$ is metrizable. As is the case for every Banach space, $\iota_{\Mo}(B_{\Mo})$ is weak$^\ast$ dense in $B_{\Mo^{\ast \ast}}$. Hence there is a sequence $(m_n)_{n=1}^\infty$ in $\Mo$ such that $\iota_{\Mo} m_n \to m^{\ast \ast}$ weak$^\ast$ and $\|m_n\|_{\mathcal{B}(\ell^2(\N))} \leq \|m^{\ast \ast}\|_{\Mo^{\ast \ast}}.$ Suppose that $m_n = M(\alpha_n)$ and let $a, b \in \ell^2(\N)$ be two finite sequences. Then, since $\iota_{\Mo} m_n \to m^{\ast \ast}$  weak$^\ast$,
$$\langle M(\alpha_n) a, b \rangle_{\ell^2(\N)} = (\alpha_n, a \star b) = I^\ast (a\star b)(m_n)  \to m^{\ast \ast}(I^\ast (a\star b)) = (\alpha, a\star b),$$
as $n \to \infty$.
It follows that
$$|\langle M(\alpha) a, b \rangle_{\ell^2(\N)}| = |(\alpha, a \star b)| \leq \varlimsup_{n \to \infty} \|m_n\|_{\mathcal{B}(\ell^2(\N))} \|a\|_{\ell^2(\N)} \|b\|_{\ell^2(\N)} \leq \|m^{\ast \ast}\|_{\Mo^{\ast \ast}} \|a\|_{\ell^2(\N)} \|b\|_{\ell^2(\N)}.$$
Since $a,b$ were arbitrary finite sequences, it follows that $M(\alpha) \in \M$ and 
$$\|M(\alpha)\|_{\mathcal{B}(\ell^2(\N))} \leq \|m^{\ast \ast}\|_{\Mo^{\ast \ast}}.$$ Since $\alpha = I^{\ast \ast}m^{\ast \ast}$ this proves that $I^{\ast \ast}$ maps $\Mo^{\ast \ast}$ contractively into $\M$.

Conversely, suppose that $m = M(\alpha) \in \M$. By Lemma~\ref{lem:dilation}, for $0 < r < 1$, $M(\alpha_r) \in \Mo$, $\|M(\alpha_r)\| \leq \|M(\alpha)\|$, and $\alpha_r \to \alpha$ in $\ell^2(\N)$ as $r \to 1$. Define $m^{\ast \ast} \in \Mo^{\ast \ast}$ by
\begin{equation} \label{eq:mstarstardef}
m^{\ast \ast} (I^\ast a) := (\alpha, a) = \lim_{r \to 1}  (\alpha_r, a) = \lim_{r \to 1}  I^{\ast}a(M(\alpha_r)),\quad a \in \ell^2(\N). 
\end{equation}
This specifies an element $m^{\ast \ast} \in \Mo^{\ast \ast}$ since $I^\ast$ has dense range in $\Mo^\ast$ and
$$ |m^{\ast \ast} (I^\ast a)| \leq \varlimsup_{r \to 1} \|M(\alpha_r)\|_{\mathcal{B}(\ell^2(\N))} \|I^\ast a\|_{\Mo^\ast} \leq  \|M(\alpha)\|_{\mathcal{B}(\ell^2(\N))} \|I^\ast a\|_{\Mo^\ast}.$$
From this inequality we also see that 
\begin{equation} \label{eq:expansive}
\|m^{\ast \ast}\|_{\Mo^{\ast \ast}} \leq \|m\|_{\mathcal{B}(\ell^2(\N))}.
\end{equation} Furthermore, since
$$(I^{\ast \ast}m^{\ast \ast}, a) = m^{\ast \ast}(I^\ast a) = (\alpha, a), \quad a \in \ell^2(\N),$$
we have that $I^{\ast \ast}m^{\ast \ast} = \alpha$. Hence $I^{\ast \ast}$ maps $\Mo^{\ast \ast}$ bijectively and contractively onto $\M$. By \eqref{eq:expansive}, $I^{\ast \ast} \colon \Mo^{\ast \ast} \to \M$ is also expansive, and hence it is an isometric isomorphism.

Recall that $\mathcal{K}(\ell^2(\N))$ is an M-ideal in $\mathcal{B}(\ell^2(\N))$ \cite{Dix50} -- indeed, $\mathcal{K}(\ell^2(\N))$ is a two-sided closed ideal in $\mathcal{B}(\ell^2(\N))$. It is well known that there is an isometric isomorphism $E \colon \mathcal{K}(\ell^2(\N))^{\ast \ast} \to \mathcal{B}(\ell^2(\N))$ such that $E\iota_{\mathcal{K}(\ell^2(\N))} K = K$ for all $K \in \mathcal{K}(\ell^2(\N))$. Thus $\mathcal{K}(\ell^2(\N))$ is M-embedded. Since $\Mo$ is a closed subspace of $\mathcal{K}(\ell^2(\N))$, $\Mo$ is also M-embedded \cite[Theorem III.1.6]{HWW}. Hence, since we have shown that $I^{\ast \ast} \colon \Mo^{\ast \ast} \to \M$ is an isometric isomorphism for which $I^{\ast \ast}\iota_{\Mo}m = m$ for all $m \in \Mo$, it follows that $\Mo$ is an M-ideal in $\M$.

Finally, if $V \colon \Mo^{\ast \ast} \to \M$ is another isometric isomorphism such that $V\iota_{\Mo}m = m$, $m \in \Mo$, then $F = V^{-1}I^{\ast \ast} \colon \Mo^{\ast \ast} \to \Mo^{\ast \ast}$ is an isometric isomorphism such that $F\iota_{\Mo} = \iota_{\Mo}$. However, since $\Mo$ is M-embedded, $F$ must be obtained as the bitranspose, $F = G^{\ast \ast}$, of an isometric isomorphism $G \colon \Mo \to \Mo$ \cite[Proposition III.2.2]{HWW}. But then $G = \Id_{\Mo}$, since
$$m^\ast(Gm) = G^\ast m^\ast (m) = F \iota_{\Mo} m (m^\ast) = m^\ast(m), \quad m \in \Mo, \; m^\ast \in \Mo^\ast.$$
Hence $F = \Id_{\Mo^{\ast \ast}}$ and so $V = I^{\ast \ast}$.
\end{proof}

The predual of a space of Hankel operators usually has an abstract description as a projective tensor product \cite{ARSW10, CRW76, FL02}. In the present context, let
$$X = \left\{c \,:\, c = \sum_{\textrm{finite}} a_k \star b_k, \; a_k, b_k \textrm{ finite sequences} \right\},$$
and equip $X$ with the norm
$$\|c\|_{X} = \inf \sum_{\textrm{finite}} \|a_k\|_{\ell^2(\N)} \|b_k\|_{\ell^2(\N)},$$
where the infimum is taken over all \textbf{finite} representations of $c$. By writing $c = c \star (1,0,0, \ldots)$ it is clear that $\|c\|_{X} \leq \|c\|_{\ell^2(\N)}$ for $c \in X$.  

We define the projective tensor product space $\mathcal{X} = \ell^2(\N) \; \hat{\star} \; \ell^2(\N)$ with respect to Dirichlet convolution as the Banach space completion of $X$. It is essentially definition that $\mathcal{X}^\ast \simeq \M$.
\begin{lem}\label{lem:predual}
For $m = M(\alpha) \in \M$, let 
$$Jm(c) = (\alpha, c), \quad c \in X.$$ 
Then $Jm$ extends to a bounded functional on $\mathcal{X}$ for every $m \in \M$, and $J \colon \M \to \mathcal{X}^\ast$ is an isometric isomorphism.
\end{lem}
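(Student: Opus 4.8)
The plan is to verify the three defining properties of an isometric isomorphism onto $\mathcal{X}^\ast$ in turn: that $J$ is well defined and contractive, that it is isometric, and that it is surjective. Throughout, the one computation doing the real work is the behaviour of the standard basis $(e_n)_{n=1}^\infty$ of $\ell^2(\N)$ under Dirichlet convolution. Since $e_n \star e_m = e_{nm}$ and $e_1 = (1,0,0,\ldots)$ is the convolution unit, every basis vector lies in $X$, and for finite sequences $a,b$ one has $a \star b = \sum_{n,m} a(n)\overline{b(m)} e_{nm}$, so that $(\alpha, a \star b) = \langle M(\alpha)a, b\rangle_{\ell^2(\N)}$ by \eqref{eq:helsondirichlet}.

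For contractivity, I would take $c \in X$ with a finite representation $c = \sum_k a_k \star b_k$. Then $Jm(c) = \sum_k (\alpha, a_k \star b_k) = \sum_k \langle M(\alpha) a_k, b_k \rangle_{\ell^2(\N)}$, whence $|Jm(c)| \leq \|M(\alpha)\|_{\mathcal{B}(\ell^2(\N))} \sum_k \|a_k\|_{\ell^2(\N)} \|b_k\|_{\ell^2(\N)}$. Taking the infimum over all finite representations gives $|Jm(c)| \leq \|m\|_{\mathcal{B}(\ell^2(\N))} \|c\|_X$, so $Jm$ extends to a functional on $\mathcal{X}$ with $\|Jm\|_{\mathcal{X}^\ast} \leq \|m\|_{\mathcal{B}(\ell^2(\N))}$. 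For the reverse inequality I would test $Jm$ against the special elements $c = a \star b$ with $a, b$ finite: since $\|a \star b\|_X \leq \|a\|_{\ell^2(\N)} \|b\|_{\ell^2(\N)}$ and $Jm(a \star b) = \langle M(\alpha) a, b\rangle_{\ell^2(\N)}$, taking the supremum over finite $a, b$ of norm one recovers $\|Jm\|_{\mathcal{X}^\ast} \geq \|M(\alpha)\|_{\mathcal{B}(\ell^2(\N))}$. Hence $J$ is isometric.

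The surjectivity is the step requiring the most care, and is where I expect the only genuine (if mild) obstacle: one must reconstruct a bounded Helson matrix from an abstract functional and then check that $J$ hits it exactly. Given $\phi \in \mathcal{X}^\ast$, I would set $\alpha(n) = \phi(e_n)$, using $e_n \in X$. The identity $a \star b = \sum_{n,m} a(n)\overline{b(m)} e_{nm}$ together with linearity of $\phi$ then yields $\phi(a \star b) = \sum_{n,m} a(n)\overline{b(m)} \alpha(nm)$ for all finite $a, b$; this is precisely the Hankel form $\langle M(\alpha) a, b\rangle_{\ell^2(\N)}$, and the bound $|\phi(a \star b)| \leq \|\phi\|_{\mathcal{X}^\ast}\|a\|_{\ell^2(\N)}\|b\|_{\ell^2(\N)}$ shows $M(\alpha) \in \M$ with $\|M(\alpha)\|_{\mathcal{B}(\ell^2(\N))} \leq \|\phi\|_{\mathcal{X}^\ast}$. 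Finally, $Jm$ and $\phi$ agree on each $a \star b$, hence on all of $X$ by linearity, hence on $\mathcal{X}$ by density and continuity; thus $J(M(\alpha)) = \phi$. Combined with the isometry this proves that $J$ is an isometric isomorphism. The only point one must not overlook is that the structural constraint making $M(\alpha)$ a Hankel operator rather than an arbitrary bounded operator is automatic here, being built into the multiplicativity $e_n \star e_m = e_{nm}$ of the convolution; this is what the paper means by saying the duality $\mathcal{X}^\ast \simeq \M$ holds essentially by definition.
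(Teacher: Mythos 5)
Your proof is correct and follows essentially the same route as the paper: the same contractivity estimate over finite representations of $c$, the same test elements $c = a \star b$ for the reverse inequality, and the same reconstruction of $\alpha$ from an abstract functional for surjectivity. The only cosmetic difference is that you define $\alpha(n) = \phi(e_n)$ directly on basis vectors, whereas the paper sets $\alpha = E^\ast \ell$ using the adjoint of the contraction $E \colon \ell^2(\N) \to \mathcal{X}$ extending the inclusion of finite sequences; these produce the same sequence, and both arguments then get boundedness of $M(\alpha)$ from the bound $\|a \star b\|_X \leq \|a\|_{\ell^2(\N)}\|b\|_{\ell^2(\N)}$.
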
 
\begin{proof}
Let $m \in \M$. If $c \in X$ and $\varepsilon > 0$, choose a representation $c = \sum_{k=1}^N a_k \star b_k$, where $a_k$ and $b_k$ are finite sequences for every $k$, and $\sum_{k=1}^N \|a_k\|_{\ell^2(\N)} \|b_k\|_{\ell^2(\N)} < \|c\|_X + \varepsilon$. Then
$$\left|Jm(c)\right| = \left|\sum_{k=1}^N \langle M(\alpha)a_k, b_k \rangle_{\ell^2(\N)}\right| \leq \|m\|_{\mathcal{B}(\ell^2(\N))}(\|c\|_{X} + \varepsilon).$$
Hence $\|Jm\|_{\mathcal{X}^\ast} \leq \|m\|_{\mathcal{B}(\ell^2(\N))}$. Choosing finite sequences $a$ and $b$ such that $\|a\|_{\ell^2(\N)} =\|b\|_{\ell^2(\N)} = 1$ and $\langle M(\alpha)a, b \rangle_{\ell^2(\N)} > \|m\|_{\mathcal{B}(\ell^2(\N))} - \varepsilon$, and letting $c = a \star b$ gives that
$$\|m\|_{\mathcal{B}(\ell^2(\N))} - \varepsilon < \|Jm\|_{\mathcal{X}^\ast}\|c\|_X \leq \|Jm\|_{\mathcal{X}^\ast}.$$
 Hence $J$ is an isometry. 

The inclusion of finite sequences into $X$ extends to a contraction $E \colon \ell^2(\N) \to \mathcal{X}$. Let $\ell \in \mathcal{X}^\ast$ and let $c \in X$. Then $\ell(c) = (\alpha, c)$, where $\alpha = E^\ast \ell \in \ell^2(\N)$. Then $m = M(\alpha) \in \M$, since $\ell \in \mathcal{X}^\ast$. Clearly $Jm = \ell$ and thus $J$ is onto.
\end{proof}

\newtheorem*{thm:midspace}{Theorem~\ref{thm:midspace}}
\begin{thm:midspace} 
For every $c \in X$, let
$$Lc(m) = (\alpha, c), \quad m = M(\alpha) \in \Mo.$$
Then $L$ extends to an isometric isomorphism $L \colon \mathcal{X} \to \Mo^\ast$, and
$$L^\ast U^{-1} = J \colon \M \to \mathcal{X}^\ast$$
is the isometric isomorphism of Lemma~\ref{lem:predual}. Here $U \colon \Mo^{\ast \ast} \to \M$ is the isometric isomorphism of Theorem~\ref{thm:mideal}.
\end{thm:midspace}
\begin{proof}
The quickest proof proceeds by noting that $\Mo^{\ast}$ is a strongly unique predual of $\Mo^{\ast \ast}$, since $\Mo$ is M-embedded. This implies that the isometric isomorphism $JU \colon \Mo^{\ast \ast} \to \mathcal{X}^\ast$ is the adjoint of an isometric isomorphism $E \colon \mathcal{X} \to \Mo^\ast$, $E^\ast = JU$. But then, for $c \in X$ and $m = M(\alpha) \in \Mo$,
\begin{equation} \label{eq:Lcomp}
Ec(m) = \iota_{\Mo}m(Ec) = E^\ast \iota_{\Mo}m (c) = JU \iota_{\Mo}m (c) = Jm(c) = (\alpha, c) = Lc(m).
\end{equation}
Hence $L = E$, and thus $L$ is an isometric isomorphism.

Alternatively, the weak$^\ast$-weak$^\ast$ continuity of $JU$ can be proven by hands. $L$ clearly extends to a contractive operator $L \colon \mathcal{X} \to \Mo^\ast$. The computation \eqref{eq:Lcomp} shows that $JU \iota_{\Mo} = L^\ast \iota_{\Mo}$. Let $m^{\ast \ast} \in \Mo^{\ast \ast}$ and let $M(\alpha) = Um^{\ast \ast}$. From \eqref{eq:mstarstardef} we deduce that $m^{\ast \ast}_r = \iota_{\Mo} M(\alpha_r) \to m^{\ast \ast}$ weak$^\ast$ in $\Mo^{\ast \ast}$. Hence $L^{\ast} m^{\ast \ast}_r \to L^{\ast} m^{\ast \ast}$ weak$^\ast$ in $\mathcal{X}^\ast$. On the other hand, for $c \in X$,
$$JUm^{\ast \ast}(c) = (\alpha, c) = \lim_{r \to 1} (\alpha_r, c) = \lim_{r \to 1} JUm^{\ast \ast}_r(c) = \lim_{r \to 1} L^{\ast} m^{\ast \ast}_r(c) = L^{\ast} m^{\ast \ast}(c).$$
This shows that $JU = L^{\ast}$, and hence $L$ is an isometric isomorphism.
\end{proof}
\begin{remark}
In the notation of Theorem~\ref{thm:mideal}, $I^\ast c = L c$ for $c \in X$. Theorem~\ref{thm:midspace} hence completes the picture of Theorem~\ref{thm:mideal} by giving an interpretation of the operator $I^\ast$.
\end{remark}
 Suppose that we had instead defined the projective tensor product space $ \ell^2(\N) \; \hat{\star} \; \ell^2(\N)$ as the sequence space
$$\mathcal{Y} = \left\{c \,:\, c = \sum_{k=1}^\infty a_k \star b_k, \; a_k, b_k \in \ell^2(\N), \; \sum_{k=1}^\infty \|a_k\|_{\ell^2(\N)} \|b_k\|_{\ell^2(\N)} < \infty \right\},$$
normed by
$$\|c\|_{\mathcal{Y}} = \inf \sum_{k=1}^\infty \|a_k\|_{\ell^2(\N)} \|b_k\|_{\ell^2(\N)},$$
where the infimum is taken over all representations of $c$. One would like to know that $\mathcal{Y} = \mathcal{X}$. Indeed, it is not a priori clear that $\mathcal{X}$ is a sequence space; or if $\mathcal{X}$ is identifiable with a space of Dirichlet series, if considering multiplicative Hankel operators in that context. For $\mathcal{Y}$ these properties are immediate.
\begin{lem} \label{lem:complete}
$\mathcal{Y}$ is a Banach space.
\end{lem}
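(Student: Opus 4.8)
The plan is to treat $\mathcal{Y}$ as a (twisted) projective tensor product and run the standard completeness argument, with everything hinging on one elementary coordinatewise estimate. First I would observe that for $a, b \in \ell^2(\N)$ and every fixed $n$ the sum $(a \star b)(n) = \sum_{k \mid n} a(k)\overline{b(n/k)}$ is finite, and that by Cauchy--Schwarz (using that $k \mapsto n/k$ permutes the divisors of $n$)
$$|(a \star b)(n)| \leq \Big(\sum_{k\mid n}|a(k)|^2\Big)^{1/2}\Big(\sum_{k\mid n}|b(n/k)|^2\Big)^{1/2} \leq \|a\|_{\ell^2(\N)}\|b\|_{\ell^2(\N)}.$$
Applying this term by term to any admissible representation $c = \sum_k a_k \star b_k$ shows that each coordinate series converges absolutely, $\sum_k |(a_k \star b_k)(n)| \leq \sum_k \|a_k\|_{\ell^2(\N)}\|b_k\|_{\ell^2(\N)} < \infty$, so the defining sum converges coordinatewise; taking the infimum over representations gives $|c(n)| \leq \|c\|_{\mathcal{Y}}$ for every $n$. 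Thus each coordinate evaluation $c \mapsto c(n)$ is a contraction on $\mathcal{Y}$.

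Next I would check that $\|\cdot\|_{\mathcal{Y}}$ is genuinely a norm. Homogeneity and the triangle inequality are immediate from the definition as an infimum over representations, since concatenating near-optimal representations of $c$ and $c'$ produces one of $c + c'$. The only nontrivial axiom is separation, and this is exactly the content of the coordinate bound: if $\|c\|_{\mathcal{Y}} = 0$ then $|c(n)| \leq \|c\|_{\mathcal{Y}} = 0$ for all $n$, whence $c = 0$. In particular $\mathcal{Y}$ is realized as a genuine sequence space, sitting inside the Fréchet space of all sequences under coordinatewise convergence.

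For completeness I would use the criterion that a normed space is complete as soon as every absolutely convergent series converges. So let $(c_j)_{j=1}^\infty \subset \mathcal{Y}$ with $\sum_j \|c_j\|_{\mathcal{Y}} < \infty$. For each $j$ choose a representation $c_j = \sum_k a_{j,k} \star b_{j,k}$ with $\sum_k \|a_{j,k}\|_{\ell^2(\N)}\|b_{j,k}\|_{\ell^2(\N)} \leq \|c_j\|_{\mathcal{Y}} + 2^{-j}$, and enumerate the countable family $\{(a_{j,k}, b_{j,k})\}_{j,k}$ as a single sequence. Its weights are summable, $\sum_{j,k}\|a_{j,k}\|_{\ell^2(\N)}\|b_{j,k}\|_{\ell^2(\N)} \leq \sum_j \|c_j\|_{\mathcal{Y}} + 1 < \infty$, so by the coordinate bound the combined series converges coordinatewise to a sequence $c$, which therefore lies in $\mathcal{Y}$ with that quantity as an upper bound for $\|c\|_{\mathcal{Y}}$. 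Discarding the first $N$ blocks leaves a representation of $c - \sum_{j=1}^N c_j$, giving $\|c - \sum_{j=1}^N c_j\|_{\mathcal{Y}} \leq \sum_{j>N}(\|c_j\|_{\mathcal{Y}} + 2^{-j}) \to 0$. Hence $\sum_j c_j$ converges in $\mathcal{Y}$, and $\mathcal{Y}$ is complete.

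I expect no serious obstacle; the one point demanding care is the legitimacy of passing to the coordinatewise limit and of recombining the double-indexed family into a single admissible representation, and this is precisely what the estimate $|(a\star b)(n)| \leq \|a\|_{\ell^2(\N)}\|b\|_{\ell^2(\N)}$ secures. Equivalently, one may argue structurally: the bilinear map $\star$ linearizes to a norm-one operator $\Phi$ from the Banach space $\ell^2(\N)\,\hat{\otimes}\,\ell^2(\N)$ into the Fréchet space of all sequences, and $\mathcal{Y}$ is its image carrying the quotient norm of $(\ell^2(\N)\,\hat{\otimes}\,\ell^2(\N))/\ker\Phi$; since $\ker\Phi$ is closed by the same coordinate bound, $\mathcal{Y}$ is a Banach space as a quotient of one by a closed subspace.
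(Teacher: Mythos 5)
Your proof is correct and follows essentially the same route as the paper's: the coordinate bound $|(a \star b)(n)| \leq \|a\|_{\ell^2(\N)}\|b\|_{\ell^2(\N)}$ makes each evaluation $c \mapsto c(n)$ a bounded functional on $\mathcal{Y}$ (giving separation of the norm), and completeness is obtained from the absolutely-convergent-series criterion by merging near-optimal representations of the summands into a single admissible representation of the coordinatewise limit. Your explicit $2^{-j}$ bookkeeping, the verification of the other norm axioms, and the closing quotient-of-projective-tensor-product remark are just more detailed or alternative packaging of the same argument.
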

\begin{proof}
Since $|(a \star b)(n)| \leq \|a\|_{\ell^2(\N)}\|b\|_{\ell^2(\N)}$ it is clear that 
$$e_n(c) = c(n), \quad c \in \mathcal{Y},$$
defines an element $e_n \in \mathcal{Y}^\ast$, for every $n \in \N$. It follows that $\|c\|_{\mathcal{Y}} = 0$ if and only if $c = 0$.

Suppose that $\sum_{k=1}^\infty c_k$ is an absolutely convergent series in $\mathcal{Y}$. Then there are double sequences $(a_{k,j})$ and $(b_{k,j})$ such that
$c_k = \sum_{j=1}^\infty a_{k,j} \star b_{k,j}$ for every $k$ and
$$\sum_{k,j=1}^\infty \|a_{k,j}\|_{\ell^2(\N)}\|b_{k,j}\|_{\ell^2(\N)} < \infty.$$
Then $c = \sum_{k,j=1}^\infty a_{k,j} b_{k,j}$ is an element of $\mathcal{Y}$ and 
$$\|c - \sum_{k=1}^N c_k\|_{\mathcal{Y}} \leq \sum_{k=N+1}^\infty \sum_{j=1}^\infty \|a_{k,j}\|_{\ell^2(\N)}\|b_{k,j}\|_{\ell^2(\N)} \to 0, \quad N \to \infty.$$
Hence $\sum_{k=1}^\infty c_k$ converges in $\mathcal{Y}$ to $c$. Thus $\mathcal{Y}$ is complete.
\end{proof}
We now prove that $\mathcal{Y} = \mathcal{X}$. The details are similar to those of \cite{RS14}, where projective tensor products of spaces of holomorphic functions were considered. Note that $X$ is contractively contained in $\mathcal{Y}$.
\begin{prop} \label{prop:altdef}
The inclusion $V \colon X \to \mathcal{Y}$ extends to an isometric isomorphism $V \colon \mathcal{X} \to \mathcal{Y}$.
\end{prop}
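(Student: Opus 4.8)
The plan is to realize $V$ as the canonical map out of a completion and reduce the statement to two facts about the dense subspace $X$. Since $X$ sits contractively inside the complete space $\mathcal{Y}$ (as noted before the statement), the inclusion extends to a contraction $V \colon \mathcal{X} \to \mathcal{Y}$, because $\mathcal{X}$ is by definition the completion of $(X, \|\cdot\|_X)$ and $\mathcal{Y}$ is complete by Lemma~\ref{lem:complete}. To upgrade this contraction to an isometric isomorphism it suffices to prove (i) that $X$ is dense in $\mathcal{Y}$, and (ii) that $\|c\|_{\mathcal{Y}} = \|c\|_X$ for every $c \in X$. Granting these, $V$ restricts to an isometry from $(X, \|\cdot\|_X)$ onto the dense subspace $(X, \|\cdot\|_{\mathcal{Y}}) \subseteq \mathcal{Y}$; an isometry of a dense subspace of $\mathcal{X}$ into the complete space $\mathcal{Y}$ extends to an isometry, whose image is complete, hence closed, and contains the dense set $X$, so $V$ is an isometric isomorphism onto $\mathcal{Y}$.

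For the density in (i), I would start from $c = \sum_{k=1}^\infty a_k \star b_k \in \mathcal{Y}$ with $\sum_k \|a_k\|_{\ell^2(\N)}\|b_k\|_{\ell^2(\N)} < \infty$ and truncate the series: the tail $\sum_{k>N}a_k \star b_k$ has $\mathcal{Y}$-norm at most $\sum_{k>N}\|a_k\|_{\ell^2(\N)}\|b_k\|_{\ell^2(\N)} \to 0$. Each partial sum is a finite sum of convolutions of $\ell^2(\N)$-sequences, and replacing each $a_k,b_k$ by a finite truncation $a_k', b_k'$ changes $a_k \star b_k$ by at most $\|a_k - a_k'\|_{\ell^2(\N)}\|b_k\|_{\ell^2(\N)} + \|a_k'\|_{\ell^2(\N)}\|b_k - b_k'\|_{\ell^2(\N)}$ in $\mathcal{Y}$-norm. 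Hence finite sums of convolutions of finite sequences, i.e. elements of $X$, are $\mathcal{Y}$-dense. This step is routine.

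The content is in (ii). One inequality is immediate: a finite representation of $c \in X$ is in particular an admissible representation in $\mathcal{Y}$, so $\|c\|_{\mathcal{Y}} \le \|c\|_X$. For the reverse I would invoke the duality $\mathcal{X}^\ast \simeq \M$ of Lemma~\ref{lem:predual}: since $J$ is a surjective isometry and $c \in X \subseteq \mathcal{X}$,
$$\|c\|_X = \|c\|_{\mathcal{X}} = \sup\left\{ |(\alpha, c)| \,:\, m = M(\alpha) \in \M, \ \|m\|_{\mathcal{B}(\ell^2(\N))} \le 1 \right\}.$$
Fixing such an $m$ and any representation $c = \sum_k a_k \star b_k$ in $\mathcal{Y}$, the desired bound $|(\alpha, c)| \le \|m\|_{\mathcal{B}(\ell^2(\N))} \sum_k \|a_k\|_{\ell^2(\N)}\|b_k\|_{\ell^2(\N)}$ would follow at once from the identity $(\alpha, c) = \sum_{k=1}^\infty \langle M(\alpha) a_k, b_k \rangle_{\ell^2(\N)}$, whose right-hand side converges absolutely with $\sum_k |\langle M(\alpha)a_k, b_k\rangle_{\ell^2(\N)}| \le \|m\|_{\mathcal{B}(\ell^2(\N))} \sum_k \|a_k\|_{\ell^2(\N)}\|b_k\|_{\ell^2(\N)}$; taking the infimum over representations and then the supremum over $m$ yields $\|c\|_X \le \|c\|_{\mathcal{Y}}$.

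Establishing that identity is the main obstacle, and it is genuinely delicate: $(\alpha, c) = \sum_n \alpha(n)c(n)$ is a finite sum because $c \in X$ is finitely supported, whereas each individual $a_k \star b_k$ has full support and lies only in $\ell^\infty(\N)$, so one cannot naively interchange the sums over $n$ and $k$ (note $\alpha$ need not lie in $\ell^1(\N)$). My plan is to regularize using the dilations $D_r$, which map $\ell^2(\N)$ boundedly into $\ell^1(\N)$ since $\|D_r a\|_{\ell^1(\N)} \le \big(\sum_n r^{2\sum_j j\kappa_j}\big)^{1/2}\|a\|_{\ell^2(\N)}$. Applying $D_r$ to the representation gives $D_r c = \sum_k D_r a_k \star D_r b_k$ with each summand in $\ell^1(\N)$, so Fubini is justified and, using \eqref{eq:helsondirichlet} and $M(\alpha_r) = D_r M(\alpha) D_r$ from Lemma~\ref{lem:dilation},
$$(\alpha, D_r c) = \sum_{k=1}^\infty (\alpha, D_r a_k \star D_r b_k) = \sum_{k=1}^\infty \langle M(\alpha) D_r a_k, D_r b_k\rangle_{\ell^2(\N)} = \sum_{k=1}^\infty \langle M(\alpha_r) a_k, b_k\rangle_{\ell^2(\N)}.$$
Letting $r \to 1$, the left side tends to $(\alpha, c)$ because $c$ is finitely supported and $D_r \to \Id_{\ell^2(\N)}$ pointwise, while on the right the bound $\|M(\alpha_r)\|_{\mathcal{B}(\ell^2(\N))} \le \|M(\alpha)\|_{\mathcal{B}(\ell^2(\N))}$ from Lemma~\ref{lem:dilation} furnishes a summable, $r$-independent dominating bound $\|m\|_{\mathcal{B}(\ell^2(\N))}\|a_k\|_{\ell^2(\N)}\|b_k\|_{\ell^2(\N)}$; dominated convergence for series, together with $\langle M(\alpha_r)a_k, b_k\rangle_{\ell^2(\N)} = \langle M(\alpha) D_r a_k, D_r b_k\rangle_{\ell^2(\N)} \to \langle M(\alpha)a_k, b_k\rangle_{\ell^2(\N)}$, then yields the required identity. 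Thus the delicate conditional interchange is replaced by an absolutely convergent one via $D_r$, controlled by the contractivity $\|M(\alpha_r)\|_{\mathcal{B}(\ell^2(\N))} \le \|M(\alpha)\|_{\mathcal{B}(\ell^2(\N))}$ — precisely the two ingredients emphasized in the introduction.
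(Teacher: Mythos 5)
Your proof is correct, but it takes a genuinely different route from the paper's. The paper stays entirely inside the sequence spaces: it first shows that the coordinate functionals $(e_n)$ separate the points of $\mathcal{X}$ (using the dilations $D_r$ and dominated convergence), which realizes $\mathcal{X}$ as a sequence space and makes $V$ injective; it then proves surjectivity and the isometry together via the block-decomposition trick of \cite{RS14}: each $\ell^2(\N)$ factor $a_k$ in a $\mathcal{Y}$-representation of $c$ is split into an $\ell^1$-summable series of finite sequences, so that the representation can be refined into one by finite sequences with norm-sum at most $\|c\|_{\mathcal{Y}} + 2\varepsilon$; the partial sums are then Cauchy in $X$, converge to a preimage $\tilde{c} \in \mathcal{X}$, and injectivity plus arbitrariness of $\varepsilon$ give $\|\tilde{c}\|_{\mathcal{X}} = \|c\|_{\mathcal{Y}}$. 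You instead outsource the key norm equality $\|c\|_X = \|c\|_{\mathcal{Y}}$ on $X$ to duality: Hahn--Banach plus Lemma~\ref{lem:predual} expresses $\|c\|_{\mathcal{X}}$ as a supremum over bounded Hankel forms, and you show that each such form pairs with an arbitrary infinite $\mathcal{Y}$-representation with the expected bound, using $D_r$ to force absolute convergence; density of $X$ in $\mathcal{Y}$ and the abstract completion argument then replace the paper's explicit construction of preimages (injectivity of $V$ comes for free from the isometry). What the paper's route buys is self-containedness --- no operator theory enters the proof --- and the constructive statement that finite-sequence representations nearly attain the $\mathcal{Y}$-norm. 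What your route buys is a much lighter surjectivity step and a pleasant byproduct: every $Jm$, $m \in \M$, extends to a functional on $\mathcal{Y}$ of the same norm, i.e.\ $\mathcal{Y}^\ast \simeq \M$ isometrically, obtained without first knowing $\mathcal{X} = \mathcal{Y}$. Two points you gloss over are routine but should be said: Fubini requires not merely that each $D_r a_k \star D_r b_k$ lie in $\ell^1(\N)$ but the joint bound obtained from $\|D_r a_k \star D_r b_k\|_{\ell^1(\N)} \leq \|D_r a_k\|_{\ell^1(\N)}\|D_r b_k\|_{\ell^1(\N)}$ summed over $k$, paired against $\alpha \in \ell^\infty(\N)$ (which holds because $\|\alpha\|_{\ell^2(\N)} \leq \|M(\alpha)\|_{\mathcal{B}(\ell^2(\N))}$); and \eqref{eq:helsondirichlet} is stated only for finite sequences, so applying it to $D_r a_k, D_r b_k \in \ell^1(\N) \cap \ell^2(\N)$ needs a short truncation limit. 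Neither affects the validity of the argument.
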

\begin{proof}
We make the following preliminary observation. Since for every $0 < r < 1$,
$$D_r( a \star b) = D_r a \star D_r b, \quad \|D_r\|_{\mathcal{B}(\ell^2(\N))} \leq 1,$$ $D_r$ defines a bounded operator $D_r \colon \mathcal{X} \to \mathcal{X}$,
$$\|D_r\|_{B(\mathcal{X})} \leq 1.$$
Furthermore, since $D_r \to \Id_{\ell^2(\N)}$ SOT on $\ell^2(\N)$ as $r \to 1$, it follows that $\|D_r c - c\|_{X} \leq \|D_r c - c\|_{\ell^2(\N)} \to 0$ as $r \to 1$ for every $c \in X$. Hence $D_r \to \Id_{\mathcal{X}}$ SOT on $\mathcal{X}$ as $r \to 1$.

As in Lemma~\ref{lem:complete}, for each $n \in \N$, 
$$e_n(c) = c(n), \quad c \in X,$$
extends to a functional $e_n \in \mathcal{X}^\ast$ with $\|e_n\|_{\mathcal{X}^\ast} \leq 1$. We show now that $(e_n)$ is a complete sequence in $\mathcal{X}^\ast$ with respect to the weak$^\ast$ topology. Suppose that $c \in \mathcal{X}$ and that $e_n(c) = 0$ for all $n$. Pick a sequence $(c_k)$ in $X$ such that $c_k \to c$ in $\mathcal{X}$. Then for fixed $r < 1$,
$$\|D_r c\|_{\mathcal{X}} \leq \varlimsup_{k \to \infty}\left(\|D_r(c - c_k)\|_{\mathcal{X}} + \|D_r c_k\|_{\mathcal{X}} \right) = \varlimsup_{k \to \infty}\|D_r c_k\|_{X} \leq \varlimsup_{k \to \infty}\|D_r c_k\|_{\ell^2(\N)}. $$
Since $c_k \to c$ in $\mathcal{X}$ and $e_n \in \mathcal{X}^\ast$, we have that $\lim_{k\to\infty}c_k(n) = e_n(c) = 0$ for every $n$.
Furthermore, $|c_k(n)| \leq \|e_n\|_{\mathcal{X}^\ast} \|c_k\|_{X} \leq \|c_k\|_{X}$ is uniformly bounded in $k$ and $n$. Hence it follows by the dominated convergence theorem that $\varlimsup_{k \to \infty}\|D_r c_k\|_{\ell^2(\N)} = 0$ and thus that $D_r c = 0$. Since $D_r c \to c$ in $\mathcal{X}$ as $r \to 1$ we conclude that $c = 0$. Therefore $(e_n)$ is complete.

Hence $\mathcal{X}$ is a space of sequences. More precisely, since every evaluation $e_n$ is a bounded functional on $\mathcal{Y}$ as well, the extension $V \colon \mathcal{X} \to \mathcal{Y}$ of the inclusion map is given by
\begin{equation} \label{eq:Jformula}
Vc = (e_n(c))_{n=1}^\infty, \quad c \in \mathcal{X}.
\end{equation}
The completeness of $(e_n)$ implies that $V$ is injective. 

We next prove that $V$ is onto. The argument is precisely as in \cite{RS14}, but we include it for completeness. For a sequence $a$ and $m \in \N$, let $a^m = (a(1), \ldots, a(m), 0, \ldots).$ Given $a \in \ell^2(\N)$ and $\delta > 0$, choose a sequence $(m_1, m_2, \ldots)$ such that $\|a - a^{m_k}\|_{\ell^2(\N)} \leq 2^{-k}$. Let $a_k = a^{m_{k+1}} - a^{m_k}$. Then, for sufficiently large $K$,
 $$a = a^{m_K} + \sum_{k=K}^\infty (a^{m_{k+1}} - a^{m_k}), \quad \sum_{k=K}^\infty \|a^{m_{k+1}} - a^{m_k}\|_{\ell^2(\N)} <  \delta.$$
 Hence we can write $a = \sum_{j=1}^\infty a_j$, where each $a_j$ is a finite sequence and $\sum_j \|a_j\|_{\ell^2(\N)} < \|a\|_{\ell^2(\N)} + \delta$.
 
 Given $c \in \mathcal{Y}$ and $\varepsilon > 0$,
  choose $(a_{k})_{k=1}^\infty$ and $(b_{k})_{k=1}^\infty$ such that 
 $$c = \sum_{k=1}^\infty a_{k} \star b_{k}, \quad \sum_{k=1}^\infty \|a_{k}\|_{\ell^2(\N)} \|b_{k}\|_{\ell^2(\N)} < \|c\|_{\mathcal{Y}} + \varepsilon.$$
 For each $k$, write, as in the preceding paragraph, $a_k = \sum_{j=1}^\infty a_{k,j}$, $b_k = \sum_{j=1}^\infty b_{k,j}$, where each $a_{k,j}$ and $b_{k,j}$ is a finite sequence and 
 $$\sum_{j=1}^\infty \|a_{k,j}\|_{\ell^2(\N)} < \|a_k\|_{\ell^2(\N)} + \delta_k, \quad \sum_{j=1}^\infty \|b_{k,j}\|_{\ell^2(\N)} < \|b_k\|_{\ell^2(\N)} + \delta_k.$$
 Here the $\delta_k$ are chosen so that
 $$\sum_{k=1}^\infty (\|a_k\|_{\ell^2(\N)} + \delta_k)(\|b_k\|_{\ell^2(\N)} + \delta_k) < \sum_{k=1}^\infty \|a_{k}\|_{\ell^2(\N)} \|b_{k}\|_{\ell^2(\N)} + \epsilon.$$
 Then 
 $$c = \sum_{k, j, l=1}^\infty a_{k,j} \star b_{k,l}, \quad \sum_{k, j, l=1}^\infty \|a_{k,j}\|_{\ell^2(\N)}  \|b_{k,l}\|_{\ell^2(\N)} < \sum_{k=1}^\infty \|a_{k}\|_{\ell^2(\N)} \|b_{k}\|_{\ell^2(\N)} + \epsilon < \|c\|_{\mathcal{Y}} + 2\varepsilon.$$
 
 Relabeling, we have a representation $c = \sum_{n=1}^\infty a_n \star b_n$ where $a_n$ and $b_n$ are finite sequences and $\sum_n \|a_n\|_{\ell^2(\N)}\|b_n\|_{\ell^2(\N)} < \|c\|_{\mathcal{Y}} + 2\varepsilon$. Let $c_N = \sum_{n=1}^N a_n \star b_n$. Then $c_N \to c$ in $\mathcal{Y}$, and furthermore $(c_N)$ is a Cauchy sequence in $X$, hence has a limit $\tilde{c}$ in $\mathcal{X}$. By continuity of the functionals $e_n$ on both $\mathcal{Y}$ and $\mathcal{X}$, we find in view of \eqref{eq:Jformula} that $V\tilde{c} = c$. Hence $V$ is onto.
 
  Furthermore, since $V$ is contractive,
 $$\|c\|_{\mathcal{Y}} \leq \|\tilde{c}\|_{\mathcal{X}} = \lim_{N\to\infty} \|c_N\|_{X} < \|c\|_{\mathcal{Y}} + 2\varepsilon. $$
 We already showed that $V$ is injective, so that $\tilde{c}$ is uniquely defined by $c$.  On the other hand, $\varepsilon$ is arbitrary. We conclude that $\|c\|_{\mathcal{Y}} = \|\tilde{c}\|_{\mathcal{X}}$. It follows that $V$ is an isometric isomorphism.
\end{proof}

\bibliographystyle{amsplain}
\bibliography{cpctbddhelson}

\end{document}